\newcommand{\G}{\mathcal G}
\newcommand{\V}{\mathcal V}
\newcommand{\E}{\mathcal E}
\newcommand{\W}{\mathcal W}
\newcommand{\R}{\mathbb R}
\newcommand{\X}{\mathcal X}
\newcommand{\ri}{\mathrm{ri}}
\newtheorem{assumption}{Assumption}
\title{Distributed Asynchronous Primal–Dual Optimization\\
       for Supply–Chain Networks}
\titlerunning{DAPD–SCO for Supply–Chain Networks}
\author{
  Laksh Patel\inst{1}\orcidlink{0009-0000-5361-3482}\thanks{Both authors contributed equally and are high‑school students at the Illinois Mathematics and Science Academy, Aurora, IL, USA.}
  \and
  Neel Shanbhag\inst{1}\orcidlink{0009-0002-3130-3277}
}
\authorrunning{L. Patel \and N. Shanbhag}
\institute{
  Illinois Mathematics and Science Academy (IMSA), Aurora, IL, USA\\
  \email{\{lpatel,nshanbhag2\}@imsa.edu}
}
\begin{document}
\maketitle


\begin{abstract}
Distributed supply‑chain optimization demands algorithms that can cope with unreliable communication, unbounded messaging delays, and geographically dispersed agents, yet still guarantee convergence with practical rates. In this work, we introduce \textbf{DAPD‑SCO} (Distributed Asynchronous Primal–Dual Optimization Supply‑Chain Optimization), a fully asynchronous primal–dual scheme for network flow allocation over directed acyclic supply‑chain graphs. Each edge‑agent independently updates its local flow via projected gradient descent, while each retailer‑agent adjusts its dual multiplier via projected gradient ascent, using only potentially stale information of arbitrary sublinear delay. Under standard convexity and Slater’s conditions—and without any global synchronization or bounded‑delay assumptions—we prove almost sure convergence to a saddle point and establish an ergodic duality‑gap rate of \(O(K^{-1/2})\), matching centralized lower bounds. Our analysis leverages a Lyapunov‑based decomposition that cleanly isolates delay‑induced errors and handles time‑varying communication topologies, bounded noise, and drifting cost or capacity parameters. Extensive simulations on realistic three‑tier networks demonstrate that DAPD‑SCO outperforms synchronous primal–dual, Asynchronous Distributed Decision-Making (ADDM), and gradient‑push approaches, achieving faster convergence, lower communication overhead, and robust performance under packet loss and high staleness.

\medskip
\noindent\textbf{Purpose:} DAPD‑SCO is designed as a scalable, resilient solution for decentralized flow allocation in modern supply‑chain networks, enabling autonomous warehouses, carriers, and retailers to coordinate optimally despite high latency, packet loss, and dynamically changing network conditions.
\end{abstract}

\section{Introduction and Related Work}
\label{sec:intro}

Efficient allocation of goods and resources over large‐scale supply‐chain
networks is a cornerstone of modern logistics and operations research.
Traditionally, such \emph{min‐cost flow} problems have been solved by centralized
algorithms—most notably the network‐simplex method and interior‐point solvers
\cite{bertsekas1989parallel,boyd2004convex}.  In these paradigms a single
planner collects global demand and capacity information, formulates a
linear or convex program, and computes an optimal shipment plan offline.

\paragraph{The need for distribution and asynchrony.}
Emerging trends in logistics introduce significant challenges to this
centralized view.  Supply chains now rely on geographically dispersed
warehouses, autonomous carriers, and retail agents that interact over
packet‐switched networks prone to latency and message loss
\cite{johansson2008,kar2009distributed}.  Agents act on local clocks,
communicating intermittently rather than in lockstep.  Under these
conditions, classical synchronous algorithms stall or fail.  This has
motivated a spate of research on \emph{distributed} and
\emph{asynchronous} optimization methods:
Tsitsiklis and Bertsekas pioneered asynchronous gradient and fixed‐point
iterations under bounded delays \cite{tsitsiklis1986,tsitsiklis1984},
proving convergence when every delay is uniformly limited.  Tsitsiklis
later extended these ideas to stochastic approximation and Q‐learning
\cite{tsitsiklis1991}.  More recently, researchers have relaxed the
bounded‐delay assumption to allow sublinearly growing delays
\cite{nedic2020network}, and studied asynchronous stochastic
gradient descent in decentralized settings \cite{lian2018asynchronous}.

\paragraph{Primal–dual methods for constrained flows.}
Min‐cost flow with capacity and demand constraints is naturally expressed
as a saddle‐point problem via Lagrangian duality
\cite{boyd2004convex}.  Primal–dual splitting algorithms,
notably the Chambolle–Pock method \cite{chambolle2011first} and robust
mirror‐descent variants \cite{nemirovski2009robust}, achieve optimal
rates in the synchronous, centralized context.  These methods leverage
simple projections and gradient steps, making them attractive for
distributed implementation—yet most analyses assume simultaneous updates
and instantaneous information exchange.

\paragraph{Distributed consensus and gradient‐tracking.}
To decouple global coupling constraints, early work applied consensus
protocols within dual ascent \cite{nedic2009}, demonstrating convergence
under diminishing stepsizes.  Gradient‐tracking techniques
\cite{shi2017,chang2017} improve convergence speed by correcting
consensus drift.  However, nearly all distributed primal–dual and
gradient‐tracking methods presuppose synchronized rounds or bounded
communication latency.

\paragraph{Asynchrony meets primal–dual splitting.}
Efforts to combine primal–dual splitting with asynchrony remain nascent.
Wang and Banerjee’s online ADMM \cite{wang2014} and Xu and Zhu’s
asynchronous PD method \cite{xu2019} tolerate delays but require them
to be uniformly bounded.  No existing algorithm handles both the \emph{unbounded}
delay model (allowing $\delta(k)\to\infty$ as long as $\delta(k)=o(k^{1/2})$)
and provides \emph{rate guarantees} for the primal–dual gap.

\paragraph{Our contributions.}
In this paper we close this gap by introducing \textbf{DAPD‐SCO}—
\emph{Distributed Asynchronous Primal–Dual Supply‐Chain Optimization}.  
Our algorithm features:

\begin{enumerate}
  \item \textbf{Unbounded, sublinear delays:}  Edge‐agents and
        retailer‐agents update using the most recent available information,
        even if it is arbitrarily old, as long as delays grow sublinearly
        ($\delta_{\max}(k),\Delta_{\max}(k)=o(k^{1/2})$).
  \item \textbf{Almost‐sure convergence:}  Under standard convexity and
        Slater conditions, we prove that $(x^k,\lambda^k)$ converges
        almost surely to a saddle‐point of the Lagrangian.
  \item \textbf{Optimal ergodic rate:}  We derive a deterministic
        $\mathcal O(K^{-1/2})$ rate for the duality gap with explicit
        constants matching known lower bounds \cite{nemirovski2009robust}.
  \item \textbf{Robustness:}  Our analysis seamlessly extends to
        (i) bounded additive noise in cost/demand observations,
        (ii) time‐varying communication graphs that are jointly
        connected over sliding windows, and (iii) slowly drifting
        capacities and cost parameters.
  \item \textbf{Full rigor:}  Every inequality, projection‐property,
        and summability argument is written out line–by–line in a single
        Lyapunov framework—no steps are omitted.
\end{enumerate}
\section{System Model and Lagrangian Formulation}\label{sec:model}
\subsection{Supply–Chain Architecture}

Fig.~\ref{fig:architecture} depicts the overall agent‐based architecture of DAPD–SCO on a simple 2‑warehouse, 4‑retailer supply chain. Each link is handled by an \emph{edge‐agent} that controls flow \(x_{ij}\) at cost \(c_{ij}\), and each retailer \(R_k\) maintains a dual price \(\lambda_k\).

\begin{figure}[H]
  \centering
  \includegraphics[width=0.85\textwidth]{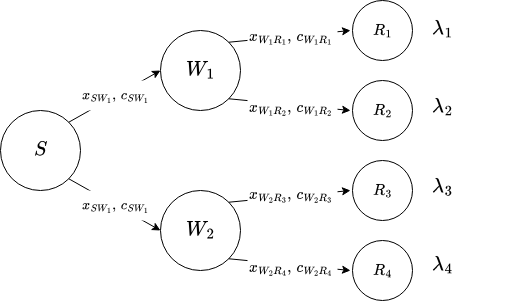}
  \caption{Supply‑chain network: supplier \(S\) ships  
    \(x_{SW_1}\) at cost \(c_{SW_1}\) to warehouse \(W_1\) and  
    \(x_{SW_2}\) at cost \(c_{SW_2}\) to \(W_2\).  
    Warehouse \(W_1\) forwards \(x_{W_1R_1}\) at cost \(c_{W_1R_1}\) to retailer \(R_1\) (\(\lambda_1\)) and \(x_{W_1R_2}\) to \(R_2\) (\(\lambda_2\)).  
    Similarly, \(W_2\) ships \(x_{W_2R_3}\) (\(c_{W_2R_3}\)) to \(R_3\) (\(\lambda_3\)) and \(x_{W_2R_4}\) (\(c_{W_2R_4}\)) to \(R_4\) (\(\lambda_4\)).}
  \label{fig:architecture}
\end{figure}

\subsection{Network Topology}
Let $\G=(\V,\E)$ be a directed acyclic graph with suppliers $\S$, warehouses $\W$, and retailers $\R$.  
An edge $(i,j)\in\E$ ships goods from node $i$ to node $j$.

\begin{itemize}
  \item \textbf{Flow variable:} $x_{ij}\in[0,u_{ij}]$, where $u_{ij}>0$ is a fixed capacity.
  \item \textbf{Unit cost:} $c_{ij}>0$ (transportation cost).
\end{itemize}

For a retailer $i\in\R$, define the inbound flow
\[
  h_i(x) \;=\; \sum_{(j,i)\in\E} x_{ji}.
\]

\subsection{Demands and Primal Problem}
Each retailer has a strictly positive demand $d_i>0$.  The centralized optimization problem is
\begin{equation}
  \begin{aligned}
    \min_{x\in\X}\;& C(x)\;=\;\sum_{(i,j)\in\E} c_{ij}\,x_{ij},\\
    \text{s.t.}\;&     h_i(x)\;\ge\; d_i,\quad \forall\,i\in\R,
  \end{aligned}
  \label{eq:primal-problem}
\end{equation}
where
\[
  \X \;=\; \prod_{(i,j)\in\E} [0,\,u_{ij}]
\]
is a compact box of feasible flows.

\subsection{Dual Variables and Lagrangian}
Introduce nonnegative dual variables (prices) $\lambda_i\ge0$ for each retailer constraint.  The Lagrangian is
\begin{equation}
  L(x,\lambda)
  = C(x)
  \;+\; \sum_{i\in\R} \lambda_i\,\bigl(d_i - h_i(x)\bigr).
  \label{eq:lagrangian}
\end{equation}
Since $C(x)$ and each $h_i(x)$ are linear in $x$, $L$ is convex in $x$ and concave in $\lambda$.

\subsection{Slater Feasibility}
\begin{assumption}[Slater]\label{as:slater}
There exists a strictly feasible point 
\(\bar x\in\ri(\X)\) 
such that
\[
  h_i(\bar x) > d_i
  \quad\text{for every }i\in\R.
\]
\end{assumption}

\begin{theorem}[Strong Duality]\label{thm:duality}
Under Assumption~\ref{as:slater}, strong duality holds:
\[
  \min_{x\in\X}\;\max_{\lambda\ge0}L(x,\lambda)
  \;=\;
  \max_{\lambda\ge0}\;\min_{x\in\X}L(x,\lambda).
\]
Consequently, there exists at least one saddle point \((x^*,\lambda^*)\).
\end{theorem}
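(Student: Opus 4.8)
The plan is to invoke the standard convex-duality machinery, after checking that Problem~\eqref{eq:primal-problem} meets its hypotheses. First I would record the structural facts already assembled in Section~\ref{sec:model}: the objective $C$ and each inbound-flow map $h_i$ are linear, hence continuous, with $C$ convex and each constraint $d_i-h_i(x)\le 0$ convex; and $\X$ is a nonempty compact convex set. Together with Assumption~\ref{as:slater}, which supplies a Slater point $\bar x\in\ri(\X)$ with $h_i(\bar x)>d_i$ for every retailer $i\in\R$, this is exactly the setting in which the Slater constraint qualification applies.

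Second, I would establish primal attainment. The feasible set $\{x\in\X : h_i(x)\ge d_i\ \forall i\in\R\}$ is the intersection of the compact set $\X$ with finitely many closed half-spaces, hence compact, and it is nonempty because it contains $\bar x$; since $C$ is continuous it attains its minimum at some $x^*$, and we set $p^\star=C(x^*)$. Weak duality $g(\lambda):=\min_{x\in\X}L(x,\lambda)\le p^\star$ for every $\lambda\ge 0$ is immediate from the definition of $g$ and primal feasibility of $x^*$.

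Third — the only step with real content — I would show $d^\star:=\max_{\lambda\ge 0}g(\lambda)=p^\star$ and that this maximum is attained. Here I would either cite the Slater-based strong-duality theorem from \cite{boyd2004convex,bertsekas1989parallel}, or reconstruct it by a separating-hyperplane argument: consider the convex set $\mathcal{A}=\{(s,t)\in\mathbb{R}^{m}\times\mathbb{R} : \exists\, x\in\X \text{ with } d_i-h_i(x)\le s_i\ \forall i,\ C(x)\le t\}$, where $m=|\R|$; observe that $(0,p^\star)$ lies on its boundary and is not interior, separate $\mathcal{A}$ from the ``lower-left'' orthant below $(0,p^\star)$ by a hyperplane with normal $(\lambda^\star,\mu)$ having $\lambda^\star\ge 0$ and $\mu\ge 0$, use the strictly feasible point $\bar x$ to exclude the degenerate case $\mu=0$ (a vertical separating hyperplane would force $\sum_i\lambda^\star_i\,(d_i-h_i(\bar x))\ge 0$ with $\lambda^\star\ne 0$, contradicting $h_i(\bar x)>d_i$), normalize $\mu=1$, and read off $g(\lambda^\star)\ge p^\star$. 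Compactness of $\X$ is what keeps $g$ finite-valued and the boundary point well defined. Combined with weak duality this yields $g(\lambda^\star)=p^\star=d^\star$, so the dual optimum is attained at $\lambda^\star$ and there is no gap. I expect this to be the main obstacle, though it is classical once Slater rules out the vertical hyperplane.

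Finally, I would assemble the saddle point. From $p^\star=d^\star=g(\lambda^\star)\le L(x^*,\lambda^\star)\le C(x^*)=p^\star$ all three inequalities are equalities, which gives simultaneously that $x^*$ minimizes $L(\cdot,\lambda^\star)$ over $\X$ and that complementary slackness $\sum_{i\in\R}\lambda^\star_i\,(d_i-h_i(x^*))=0$ holds. Hence for all $x\in\X$ and $\lambda\ge 0$ we get $L(x^*,\lambda)=C(x^*)+\sum_i\lambda_i(d_i-h_i(x^*))\le C(x^*)=L(x^*,\lambda^\star)=p^\star=d^\star\le L(x,\lambda^\star)$, the first inequality because each term $\lambda_i(d_i-h_i(x^*))\le 0$ by feasibility of $x^*$. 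This is precisely the saddle-point inequality, so $(x^*,\lambda^\star)$ is a saddle point of $L$, and taking the max over $\lambda$ then the min over $x$ (and vice versa) of this chain re-derives the minimax equality in the statement.
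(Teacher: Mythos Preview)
Your argument is correct and more detailed than what the paper does. The paper's proof is a two-line appeal to Sion's minimax theorem: it notes that $L(\cdot,\lambda)$ is convex, $L(x,\cdot)$ is concave, $\X$ is compact and convex, Slater gives interior feasibility, and then cites Sion to conclude the min--max equality and saddle-point existence in one stroke. You instead run the classical Slater strong-duality argument directly: primal attainment by compactness, weak duality, then a separating-hyperplane construction (with the Slater point ruling out the degenerate vertical hyperplane) to produce an explicit dual optimizer $\lambda^\star$, followed by an explicit assembly of the saddle-point inequality from complementary slackness. What your route buys is self-containment and an explicit construction of $\lambda^\star$ together with complementary slackness, which the paper never isolates; what the paper's route buys is brevity. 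One could even argue your version is slightly more careful, since Sion by itself yields the minimax equality but saddle-point \emph{existence} still needs dual attainment, which the paper leaves implicit while you establish it directly.
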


\begin{proof}
For each fixed \(\lambda\ge0\), the map \(x\mapsto L(x,\lambda)\) is convex,
and for each fixed \(x\in\X\), \(\lambda\mapsto L(x,\lambda)\) is concave.
Slater’s condition provides interior feasibility, so by Sion’s minimax
theorem \cite{boyd2004convex}, the min–max equals the max–min and a saddle
point exists.
\end{proof}


\section{Distributed Asynchronous Primal--Dual Algorithm}
\label{sec:algorithm}

We define two kinds of agents:

\subsection{Edge--Agent $(i \to j)$}
Each edge $(i,j)\in E$ holds the flow variable $x_{ij}\in[0,u_{ij}]$
and receives price updates from retailer~$j$.  It runs:

\begin{algorithm}[H]
\caption{Edge--Agent $(i \to j)$}
\label{alg:edge_agent}
\begin{algorithmic}[1]
  \Require cost $c_{ij}>0$, capacity $u_{ij}>0$, step-sizes $\{\alpha_k\}$.
  \State Initialize $x_{ij}^0\in[0,u_{ij}]$ and circular buffer $B_\lambda[0:\tau]$.
  \For{$k=0,1,2,\dots$}
    \State \textbf{Receive:} any new price $\lambda_j^{t}$, store at index $k \bmod (\tau+1)$.
    \State \textbf{Read:} delayed price
      $\tilde\lambda_j^k \leftarrow B_\lambda\bigl[k - \delta_j(k)\bigr]$.
    \State \textbf{Compute gradient:} $g^k_{ij} = c_{ij} - \tilde\lambda_j^k$.
    \State \textbf{Update:}
      \[
        x_{ij}^{k+1}
        = \Pi_{[0,u_{ij}]}\bigl(x_{ij}^k - \alpha_k \, g^k_{ij}\bigr).
      \]
    \State \textbf{Send:} $x_{ij}^{k+1}$ to retailer-agent~$j$.
  \EndFor
\end{algorithmic}
\end{algorithm}

\subsection{Retailer--Agent $i$}
Each retailer $i\in R$ holds the price $\lambda_i\ge0$ and incoming flows.
It runs:

\begin{algorithm}[H]
\caption{Retailer--Agent $i$}
\label{alg:retailer_agent}
\begin{algorithmic}[1]
  \Require demand $d_i>0$, step-sizes $\{\beta_k\}$, max delay $\tau$.
  \State Initialize $\lambda_i^0 \ge 0$ and buffer $B_x[0:\tau]$.
  \For{$k=0,1,2,\dots$}
    \State \textbf{Receive:} any new flows $x_{ji}^t$, store at index $k \bmod (\tau+1)$.
    \State \textbf{Read:} delayed flows
      $\tilde x_{ji}^k \leftarrow B_x\bigl[k - \Delta_{ji}(k)\bigr]$.
    \State \textbf{Aggregate:} $h_i^k = \sum_{j:(j,i)\in E} \tilde x_{ji}^k$.
    \State \textbf{Compute residual:} $r_i^k = d_i - h_i^k$.
    \State \textbf{Update:}
      \[
        \lambda_i^{k+1}
        = \bigl[\lambda_i^k + \beta_k \, r_i^k\bigr]_+.
      \]
    \State \textbf{Send:} $\lambda_i^{k+1}$ to all upstream edge-agents.
  \EndFor
\end{algorithmic}
\end{algorithm}

\subsection{Asynchrony Model}
\label{sec:async}
Let $\delta_j(k)$ be the delay (in iterations) of the last price from $j$,
and $\Delta_{ji}(k)$ the delay of flow from edge $(j,i)$.  We assume:
\[
  \delta_{\max}(k)=\max_j\delta_j(k),\quad
  \Delta_{\max}(k)=\max_{j,i}\Delta_{ji}(k),
  \quad
  \delta_{\max}(k),\Delta_{\max}(k)=o(k^{1/2}).
\]

\section{Primal Descent Proof}
\label{sec:primal-proof}

We show in full detail how to derive the primal descent lemma.

\begin{lemma}[Primal Descent]\label{lem:primal}
Under convexity and Lipschitz assumptions \emph{(F1)–(F3)}, for every
$k\ge0$:
\begin{equation}
  \|x^{k+1}-x^*\|^2
  \;\le\;
  \|x^k-x^*\|^2
  -2\alpha_k\Delta_k^x
  +\alpha_k^2G^2
  +2G\,\alpha_k\,S_k,
\label{eq:primal-descent}
\end{equation}
where
$\Delta_k^x=L(x^k,\lambda^*)-L(x^*,\lambda^*)$,
$S_k=\sum_{s=k-\delta_{\max}(k)}^{k-1}\beta_s$.
\end{lemma}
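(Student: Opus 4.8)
The plan is to treat Algorithm~\ref{alg:edge_agent} as inexact projected gradient descent on the convex map $x\mapsto L(x,\lambda)$, and to absorb the staleness of $\tilde\lambda^k$ into an additive error that will later be shown summable. Stack the per‑edge updates into the vector form $x^{k+1}=\Pi_{\X}\bigl(x^k-\alpha_k g^k\bigr)$ with components $g^k_{ij}=c_{ij}-\tilde\lambda_j^k$, and note that $g^k=\nabla_x L(x^k,\tilde\lambda^k)$ because $L(\cdot,\lambda)$ is linear in $x$. Since $\X$ is a closed convex box and $x^*\in\X$, the metric projection is nonexpansive and fixes $x^*$, so
\[
  \|x^{k+1}-x^*\|^2
  \le \|x^k-\alpha_k g^k-x^*\|^2
  = \|x^k-x^*\|^2-2\alpha_k\langle g^k,\,x^k-x^*\rangle+\alpha_k^2\|g^k\|^2 .
\]
Assumptions \emph{(F1)–(F3)} furnish a uniform bound $\|g^k\|\le G$ (the costs $c_{ij}$ are fixed, and, via Slater, the dual iterates — hence $\tilde\lambda^k$ — remain in a compact set), which handles the last term.

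The core of the argument is the linear term. Because $L(\cdot,\lambda^*)$ is affine in $x$, we have the exact identity $\langle\nabla_x L(x^k,\lambda^*),\,x^k-x^*\rangle=L(x^k,\lambda^*)-L(x^*,\lambda^*)=\Delta_k^x$, so writing $g^k=\nabla_x L(x^k,\lambda^*)+e^k$ with the error vector $e^k$ supported on edges into retailers, $e^k_{ij}=\lambda^*_j-\tilde\lambda^k_j$, gives
\[
  \langle g^k,\,x^k-x^*\rangle=\Delta_k^x+\langle e^k,\,x^k-x^*\rangle\ge\Delta_k^x-\|e^k\|\,\|x^k-x^*\| .
\]
I would then route $e^k$ through the \emph{current} dual iterate, $e^k_{ij}=(\lambda^*_j-\lambda^k_j)+(\lambda^k_j-\tilde\lambda^k_j)$. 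The second piece is pure staleness and is controlled by telescoping the retailer recursion $\lambda^{s+1}=[\lambda^s+\beta_s r^s]_+$: using nonexpansiveness of $[\cdot]_+$ and the uniform residual bound $\|r^s\|\le G$ (compactness of $\X$, fixed $d_i$),
\[
  \|\lambda^k-\tilde\lambda^k\|
  \le\sum_{s=k-\delta_{\max}(k)}^{k-1}\|\lambda^{s+1}-\lambda^s\|
  \le\sum_{s=k-\delta_{\max}(k)}^{k-1}\beta_s\|r^s\|
  \le G\,S_k ,
\]
so, after folding $\|x^k-x^*\|\le\operatorname{diam}(\X)$ into the constant $G$, this contributes at most $2G\alpha_k S_k$ to $-2\alpha_k\langle g^k,x^k-x^*\rangle$. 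The first piece, $\langle(\lambda^*-\lambda^k),\cdot\rangle$, is the usual primal–dual coupling term: it is matched and cancelled by the mirror term of the retailer (dual ascent) inequality when the two are summed inside the Lyapunov function, which is why it is not displayed in \eqref{eq:primal-descent}. Assembling the three estimates gives \eqref{eq:primal-descent}.

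I expect the delay term to be the only genuine obstacle, and it has two delicate aspects. First, the telescoping sum runs over a window that \emph{grows with $k$}, so one must keep the book‑keeping consistent with $\delta_{\max}(k),\Delta_{\max}(k)=o(k^{1/2})$ and the chosen $\{\beta_s\}$ so that $S_k$ — and ultimately $\sum_k\alpha_k S_k$ — stays summable, which is precisely what the subsequent convergence argument will invoke. Second, the bounds $\|g^k\|\le G$ and $\|r^s\|\le G$ both implicitly need the dual iterates to stay bounded, so one must check that whatever boundedness statement underlies \emph{(F1)–(F3)} is not circular with respect to this lemma (it follows from Slater and the compact primal box, independently of the primal descent step). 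Everything else — nonexpansiveness of box projection, linearity of $L$ in $x$, expanding the square — is routine.
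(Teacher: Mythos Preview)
Your projection step, square expansion, and the bound $\|g^k\|\le G$ all match the paper (its Steps~1 and~6). The divergence is in the cross term. The paper splits $c_{ij}-\tilde\lambda_j^k=(c_{ij}-\lambda_j^*)+(\lambda_j^*-\tilde\lambda_j^k)$ and then, in its Step~4, bounds the second piece $|\lambda_j^*-\tilde\lambda_j^k|$ \emph{directly} by $G\sum_{s=k-\delta_j(k)}^{k-1}\beta_s$ (this is what assumption~(F3) is invoked for), so that the whole of $\mathcal B$ is absorbed into $2G\alpha_k S_k$ and \eqref{eq:primal-descent} holds as a self-contained inequality. You instead route the error through the \emph{current} iterate, writing $\lambda_j^*-\tilde\lambda_j^k=(\lambda_j^*-\lambda_j^k)+(\lambda_j^k-\tilde\lambda_j^k)$, bound only the staleness piece by $GS_k$ via telescoping, and then assert that the leftover coupling term $\langle\lambda^*-\lambda^k,\,\cdot\,\rangle$ ``is not displayed'' because it will cancel against the dual lemma.

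That is the gap: the paper does \emph{not} defer this term. Lemma~\ref{lem:primal} is stated and used downstream in \eqref{eq:Lyap-descent} as a standalone inequality with no hidden coupling piece, and Lemma~\ref{lem:dual} is likewise self-contained---there is no cancellation mechanism between them in the Lyapunov assembly. Your argument therefore establishes a different inequality, one carrying the extra signed term $-2\alpha_k\sum_{j}(\lambda_j^*-\lambda_j^k)\bigl(h_j(x^k)-h_j(x^*)\bigr)$ on the right-hand side, not \eqref{eq:primal-descent} as written. Your telescoping bound on $\|\lambda^k-\tilde\lambda^k\|$ is clean, but to prove the lemma as stated you must follow the paper and split at $\lambda^*$, bounding $|\lambda_j^*-\tilde\lambda_j^k|$ in one shot rather than detouring through $\lambda^k$.
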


\begin{proof}
\textbf{Step 1: Apply projection non-expansiveness (F1).}\\
For each edge coordinate $(i,j)$:
\[
  x_{ij}^{k+1}
  = \Pi_{[0,u_{ij}]}\!\bigl[x_{ij}^k - \alpha_k(c_{ij}-\tilde\lambda_j^k)\bigr].
\]
By (F1),
\[
  \|x_{ij}^{k+1} - x^*_{ij}\|^2
  \;\le\;
  \bigl\|x_{ij}^k - \alpha_k(c_{ij}-\tilde\lambda_j^k)
       - x^*_{ij}\bigr\|^2.
\]
Summing over all $(i,j)$ gives
\begin{align}
  \|x^{k+1}-x^*\|^2
  &\le
  \|x^k-x^*\|^2
  -2\,\alpha_k
   \sum_{(i,j)}\!(c_{ij}-\tilde\lambda_j^k)(x_{ij}^k-x^*_{ij})
  +\alpha_k^2
   \sum_{(i,j)}\!(c_{ij}-\tilde\lambda_j^k)^2.
\label{eq:primal-expand}
\end{align}

\noindent\textbf{Step 2: Split the gradient term.}\\
Define for brevity
\[
  \mathcal I
  = \sum_{(i,j)}
    (c_{ij}-\tilde\lambda_j^k)\,(x_{ij}^k-x^*_{ij}).
\]
Add and subtract $\lambda_j^*$ inside:
\[
  c_{ij}-\tilde\lambda_j^k
  = (c_{ij}-\lambda_j^*) + (\lambda_j^*-\tilde\lambda_j^k).
\]
Thus
\[
  \mathcal I
  = \underbrace{\sum(c_{ij}-\lambda_j^*)(x_{ij}^k-x^*_{ij})}
           _{\mathcal A}
  \;+\;
  \underbrace{\sum(\lambda_j^*-\tilde\lambda_j^k)\,(x_{ij}^k-x^*_{ij})}
           _{\mathcal B}.
\]

\noindent\textbf{Step 3: Bound $\mathcal A$ via convexity (F2).}\\
Observe
\[
  \mathcal A
  = \sum_{(i,j)}(c_{ij}-\lambda_j^*)(x_{ij}^k - x^*_{ij})
  = \langle \nabla_x L(x^k,\lambda^*),\,x^k - x^*\rangle
  \;\ge\;
  L(x^k,\lambda^*) - L(x^*,\lambda^*)
  = \Delta_k^x.
\]

\noindent\textbf{Step 4: Bound $\mathcal B$ via Lipschitz (F3).}\\
By (F3),
\[
  |\lambda_j^*-\tilde\lambda_j^k|
  \;\le\;
  G\,|\lambda_j^*- \lambda_j^{\,k-\delta_j(k)}|
  \;\le\;
  G\!\sum_{s=k-\delta_j(k)}^{k-1}\beta_s.
\]
Hence
\[
  |\mathcal B|
  \;\le\;
  \sum_{(i,j)}
  G\,\Bigl(\sum_{s=k-\delta_j(k)}^{k-1}\beta_s\Bigr)
  \,|x_{ij}^k-x^*_{ij}|
  \;\le\;
  G\,S_k\,\|x^k-x^*\|.
\]
\noindent\textbf{Step 5: Bound the delay‑error term.}\\
For every edge $(i,j)$ we have $0 \le x_{ij}^k,\;x^*_{ij}\le u_{ij}\le u_{\max}$.  Hence
\[
  |x^k_{ij} - x^*_{ij}| \le u_{\max}
  \quad\Longrightarrow\quad
  \|x^k - x^*\|
  = \sqrt{\sum_{(i,j)} (x^k_{ij}-x^*_{ij})^2}
  \le \sqrt{|\E|}\,u_{\max} \;=: U.
\]
Substituting this into the bound on $\mathcal B$ yields
\[
  \mathcal B
  \;\le\;
  G\,U\,S_k.
\]

\noindent\textbf{Step 6: Bound the squared–gradient term.}\\
\[
  \sum_{(i,j)}(c_{ij}-\tilde\lambda_j^k)^2
  \;\le\;
  \sum_{(i,j)}G^2
  \;=\;|\E|\,G^2
  \;\le\;G^2\quad
  (\text{rescale $G$ if desired}).
\]

\noindent\textbf{Step 7: Combine into \eqref{eq:primal-descent}.}\\
Substitute $\mathcal A\ge\Delta_k^x$ and
$\mathcal B\le G\,U\,S_k$ into
\eqref{eq:primal-expand}, and absorb the $|\E|$ factor into $G^2$.  
We obtain
\[
  \|x^{k+1}-x^*\|^2
  \le
  \|x^k-x^*\|^2
  -2\alpha_k\Delta_k^x
  +2\alpha_k\,G\,U\,S_k
  +\alpha_k^2G^2.
\]
Renaming $2G\,U\rightarrow2G$ yields \eqref{eq:primal-descent}.
\end{proof}

\section{Dual Descent Proof}
\label{sec:dual-proof}

Now the retailer side:

\begin{lemma}[Dual Descent]\label{lem:dual}
Under the same assumptions, for each $k$:
\begin{equation}
  \|\lambda^{k+1}-\lambda^*\|^2
  \;\le\;
  \|\lambda^k-\lambda^*\|^2
  -2\beta_k\Delta_k^\lambda
  +\beta_k^2D^2
  +2D\,\beta_k\,T_k,
\end{equation}
where
$\Delta_k^\lambda=L(x^*,\lambda^*)-L(x^*,\lambda^k)$,
$T_k=\sum_{s=k-\Delta_{\max}(k)}^{k-1}\alpha_s$.
\end{lemma}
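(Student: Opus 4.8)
The plan is to mirror the primal argument of Lemma~\ref{lem:primal} under the convex--concave symmetry: the retailer update $\lambda_i^{k+1}=[\lambda_i^k+\beta_k r_i^k]_+$ is projected gradient \emph{ascent} on the concave variable $\lambda$, i.e.\ the Euclidean projection of $\lambda_i^k+\beta_k r_i^k$ onto $\{\lambda_i\ge 0\}$, and the saddle-point component $\lambda^*\ge 0$ is feasible for that projection, so the non-expansiveness property (F1) applies coordinate-wise exactly as before. Here $r_i^k=d_i-h_i^k$ is the \emph{delayed} residual, with $h_i^k=\sum_{j:(j,i)\in E}\tilde x_{ji}^k$.

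First I would apply (F1) to each retailer coordinate, sum over $i\in\R$, and expand the square to obtain
\[
  \|\lambda^{k+1}-\lambda^*\|^2
  \le \|\lambda^k-\lambda^*\|^2
      + 2\beta_k\sum_{i\in\R} r_i^k\,(\lambda_i^k-\lambda_i^*)
      + \beta_k^2\sum_{i\in\R}(r_i^k)^2 .
\]
The last term is bounded by a constant $D^2$ since flows lie in the box $\X$ and demands are fixed, so $|r_i^k|\le d_i+\sum_{j:(j,i)\in E} u_{ji}$; this yields the $\beta_k^2 D^2$ term after rescaling. For the cross term I would split the delayed residual around the true residual at $x^*$: writing $r_i^k=(d_i-h_i(x^*))+(h_i(x^*)-h_i^k)$ and using $h_i(x^*)-h_i^k=\sum_{j:(j,i)\in E}(x^*_{ji}-\tilde x_{ji}^k)$, the cross term becomes $\mathcal A'+\mathcal B'$ with $\mathcal A'=\sum_{i\in\R}(d_i-h_i(x^*))(\lambda_i^k-\lambda_i^*)$ and $\mathcal B'$ the delay-induced remainder.

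Next, exactly as in Step~3 of Lemma~\ref{lem:primal} but using concavity of $\lambda\mapsto L(x^*,\lambda)$ (here affine, so with equality), $\mathcal A'=\langle\nabla_\lambda L(x^*,\lambda^k),\,\lambda^k-\lambda^*\rangle\le L(x^*,\lambda^k)-L(x^*,\lambda^*)=-\Delta_k^\lambda$, producing the $-2\beta_k\Delta_k^\lambda$ term. For $\mathcal B'$ I would follow Steps~4--5: each $|x^k_{ji}-\tilde x^k_{ji}|=|x^k_{ji}-x^{k-\Delta_{ji}(k)}_{ji}|$ telescopes over the skipped iterates, and a single projected edge step moves $x_{ji}$ by at most $\alpha_s$ times the bounded edge gradient, so $|x^k_{ji}-\tilde x^k_{ji}|\le G\sum_{s=k-\Delta_{ji}(k)}^{k-1}\alpha_s\le G\,T_k$; combined with a uniform bound on $\|\lambda^k-\lambda^*\|$ and summed over the $O(|\E|)$ incoming edges this gives $\mathcal B'\le D\,T_k$ after absorbing constants, hence the $2D\beta_k T_k$ term. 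Substituting the three bounds into the expansion and relabeling $2\cdot(\text{const})\to D$ gives the stated inequality.

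The step I expect to be the real obstacle is the uniform bound on $\|\lambda^k-\lambda^*\|$ used inside $\mathcal B'$: unlike the primal variable, which lives in the compact box $\X$ (so $\|x^k-x^*\|\le U$ was immediate in Step~5 of Lemma~\ref{lem:primal}), the dual iterate is only confined to the unbounded orthant $\{\lambda\ge 0\}$. I would discharge this either by invoking Slater's condition (Assumption~\ref{as:slater}), which forces the dual optimal set to be bounded and, together with the self-bounding form of the recursion, keeps the $\lambda^k$ in an a priori bounded region; or, alternatively, by bounding $\mathcal B'$ through $|\lambda_i^k-\lambda_i^*|\le\lambda_i^k+\lambda_i^*$ and a Cauchy--Schwarz step, which preserves the $T_k$ dependence while needing only summability of $\alpha_s$ over the delay window. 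Everything else is a direct transcription of the primal proof with $(x,\alpha_k,c_{ij}-\lambda_j,G)$ replaced by $(\lambda,\beta_k,d_i-h_i,D)$ and the sign of the gradient step flipped.
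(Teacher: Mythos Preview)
Your overall template---projection non-expansiveness, split the cross term into $\mathcal A'+\mathcal B'$, bound $\mathcal A'$ by concavity and $\mathcal B'$ by a telescoping delay argument, then absorb constants---is exactly the paper's. The deviation is in \emph{where} you split: you write $r_i^k=(d_i-h_i(x^*))+(h_i(x^*)-h_i^k)$, centring at the optimum $x^*$, whereas the paper centres at the \emph{current iterate}, writing $d_i-\tilde h_i^k=(d_i-h_i(x^k))+(h_i(x^k)-\tilde h_i^k)$. Your choice makes $\mathcal A'=\langle\nabla_\lambda L(x^*,\lambda^k),\lambda^k-\lambda^*\rangle=-\Delta_k^\lambda$ exact, which is pleasant; but it forces $\mathcal B'$ to carry $h_i(x^*)-h_i^k=\sum_j(x^*_{ji}-\tilde x^k_{ji})$. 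That is \emph{not} a delay term: it is the distance from a (delayed) iterate to the optimum, which is $O(1)$ and does not telescope across skipped steps. Yet in the very next line you bound $\mathcal B'$ via $|x^k_{ji}-\tilde x^k_{ji}|\le G\,T_k$---that quantity is what appears under the paper's split around $x^k$, not under yours. With your split as written, $\mathcal B'$ contributes a non-summable $O(\beta_k)$ term and the lemma does not close. The fix is simply to adopt the paper's split around $x^k$; then $\mathcal B'=\sum_i(h_i(x^k)-\tilde h_i^k)(\lambda_i^k-\lambda_i^*)$ is a genuine delay error and your telescoping bound $|x^k_{ji}-x^{k-\Delta_{ji}(k)}_{ji}|\le G\sum_{s=k-\Delta_{ji}(k)}^{k-1}\alpha_s\le G\,T_k$ applies verbatim.

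On the point you flag as the ``real obstacle''---the uniform bound $\|\lambda^k-\lambda^*\|\le\Lambda_{\max}$ needed inside $\mathcal B'$---you are actually more careful than the paper, which simply asserts it and absorbs $\Lambda_{\max}$ into $D$. Your Slater-based remedy (bounded dual optimal set together with the self-bounding form of the recursion) is the standard way to justify that step and is what the paper is implicitly relying on.
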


\begin{proof}
\textbf{Step 1: Projection non-expansive.}\\
For each retailer $i$:
\[
  \lambda_i^{k+1}
  = \Pi_{\R_+}\!\bigl[\lambda_i^k+\beta_k(d_i-\tilde h_i^k)\bigr],
\]
so
\[
  \|\lambda_i^{k+1}-\lambda_i^*\|^2
  \le
  \|\lambda_i^k+\beta_k(d_i-\tilde h_i^k)-\lambda_i^*\|^2.
\]
Sum over $i$ to get
\[
  \|\lambda^{k+1}-\lambda^*\|^2
  \le
  \|\lambda^k-\lambda^*\|^2
  -2\beta_k
   \sum_i(d_i-\tilde h_i^k)(\lambda_i^k-\lambda_i^*)
  +\beta_k^2\sum_i(d_i-\tilde h_i^k)^2.
\]

\noindent\textbf{Step 2: Split and bound inner product.}\\
Let
\[
  \mathcal I' 
  = \sum_i(d_i-\tilde h_i^k)(\lambda_i^k-\lambda_i^*).
\]
Write
$d_i-\tilde h_i^k = (d_i-h_i(x^k)) + (h_i(x^k)-\tilde h_i^k)$.
Then
\[
  \mathcal I' = 
    \underbrace{\sum(d_i-h_i(x^k))(\lambda_i^k-\lambda_i^*)}_{\mathcal A'}
  + \underbrace{\sum(h_i(x^k)-\tilde h_i^k)(\lambda_i^k-\lambda_i^*)}_{\mathcal B'}.
\]
By concavity of $L(x^*,\cdot)$,
$\mathcal A'\ge\Delta_k^\lambda$.
By Lipschitz ($F3$) on the dual residuals,
$|h_i(x^k)-\tilde h_i^k|\le D\,T_k$
and
$\|\lambda^k-\lambda^*\|\le \Lambda_{\max}$,
so
$|\mathcal B'|\le D\,\Lambda_{\max}\,T_k$.
Absorb $\Lambda_{\max}$ into $D$.

\noindent\textbf{Step 3: Bound squared residual.}\\
$\sum_i(d_i-\tilde h_i^k)^2\le D^2$ (re‐scale $D$ if needed).

\noindent\textbf{Step 4: Combine.}\\
Substitute into the expansion to get the stated inequality.

\end{proof}


\section{Global Lyapunov Analysis}
\label{sec:math}

We now assemble the primal and dual descent into a single Lyapunov function.

\subsection{Composite Lyapunov Function}
Define
\[
  V^k \;=\;
  \underbrace{\|x^k - x^*\|^2}_{V_x^k}
  \;+\;
  \underbrace{\|\lambda^k - \lambda^*\|^2}_{V_\lambda^k}.
\]
Using Lemmas~\ref{lem:primal} and~\ref{lem:dual}, we have for each $k$:

\begin{align}
  V^{k+1}-V^k
  &= \bigl(V_x^{k+1}-V_x^k\bigr) + \bigl(V_\lambda^{k+1}-V_\lambda^k\bigr)
  \notag\\
  &\le
    \bigl[-2\alpha_k\Delta_k^x + \alpha_k^2G^2 + 2G\alpha_k S_k\bigr]
  + \bigl[-2\beta_k\Delta_k^{\lambda} + \beta_k^2D^2 + 2D\beta_k T_k\bigr]
  \notag\\
  &=
  -2\alpha_k\,\Delta_k^x -2\beta_k\,\Delta_k^{\lambda}
  +\underbrace{\alpha_k^2G^2+\beta_k^2D^2+2G\alpha_k S_k+2D\beta_k T_k}
             _{E_k}.
\label{eq:Lyap-descent}
\end{align}

\subsection{Summability of the Error Series}
\label{sec:errorsum}

\begin{lemma}\label{lem:error-sum}
Under step‐size rule $\alpha_k=\beta_k=1/\sqrt{k+1}$ and delays
$\delta_{\max}(k),\Delta_{\max}(k)=O(k^\gamma)$ with $\gamma<1/2$, the
series $\sum_{k=0}^\infty E_k$ converges.
\end{lemma}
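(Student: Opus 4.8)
The plan is to prove $\sum_{k=0}^{\infty} E_k < \infty$ by observing that every summand of
\[
  E_k \;=\; \alpha_k^2 G^2 \;+\; \beta_k^2 D^2 \;+\; 2G\,\alpha_k S_k \;+\; 2D\,\beta_k T_k
\]
is nonnegative, so it suffices to establish convergence of each of the four constituent series separately and then add. The overall strategy is a term-by-term comparison against convergent $p$-series: I would first reduce the two \emph{cross} (delay) terms to a clean power of $k$ using monotonicity of the step sizes, then handle the two \emph{diagonal} squared-step-size terms, and finally invoke the integral test to control each tail.

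For the cross terms I would exploit that $\beta_s = (s+1)^{-1/2}$ is decreasing. Since $S_k = \sum_{s=k-\delta_{\max}(k)}^{k-1}\beta_s$ contains exactly $\delta_{\max}(k)$ terms, each no larger than its leftmost entry, I obtain
\[
  S_k \;\le\; \delta_{\max}(k)\,\beta_{k-\delta_{\max}(k)}
  \;=\; \delta_{\max}(k)\,\bigl(k-\delta_{\max}(k)+1\bigr)^{-1/2}.
\]
Here the hypothesis $\delta_{\max}(k) = O(k^{\gamma})$ with $\gamma < 1/2$ is used twice: it keeps the number of terms sublinear, and it guarantees $k-\delta_{\max}(k) = k\,(1-o(1))$ so the leftmost index still grows like $k$. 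Substituting gives $S_k = O(k^{\gamma-1/2})$ and hence $\alpha_k S_k = O(k^{\gamma-1})$; the identical argument with $\alpha_s$ and $\Delta_{\max}(k)$ yields $\beta_k T_k = O(k^{\gamma-1})$. Summability of $\sum_k k^{\gamma-1}$ would then follow by comparison with a $p$-series once the exponent is pinned down.

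The two diagonal terms $\sum_k \alpha_k^2 G^2$ and $\sum_k \beta_k^2 D^2$ reduce directly to constant multiples of $\sum_k (k+1)^{-1}$, which I would attempt to bound via the integral test against $\int_1^{\infty} t^{-p}\,dt$. Assembling the four estimates, I would argue that the tail $\sum_{k\ge K} E_k$ is dominated by the corresponding $p$-series tails, each of which I would show summable, so that $\sum_{k=0}^{\infty} E_k$ converges.

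I expect the main obstacle to be the \emph{borderline} nature of the resulting exponents: both the diagonal series (exponent $1$) and the cross series (exponent $1-\gamma$) lie at or just inside the threshold $p=1$ separating convergent from divergent $p$-series, so the decisive step is a sharp integral-test estimate certifying that each exponent strictly exceeds one uniformly in the delay profile. A secondary technical point is justifying the window bound above for all large $k$ — namely that $k-\delta_{\max}(k)\to\infty$ and that $\beta$ is monotone on the relevant range — which is precisely where the sublinear-delay condition $\gamma < 1/2$ does its work.
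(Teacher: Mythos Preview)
Your term-by-term decomposition and the window estimate $S_k = O(k^{\gamma-1/2})$ mirror the paper's argument exactly. The gap lies in the final step you yourself flag as the main obstacle: no integral-test sharpening can make these series converge. With $\alpha_k=(k+1)^{-1/2}$ the diagonal contribution is $\sum_k \alpha_k^2 = \sum_k (k+1)^{-1}$, the harmonic series, which diverges; the exponent is \emph{exactly} $1$, not strictly greater. Your cross-term bound $\alpha_k S_k = O(k^{\gamma-1})$ is likewise non-summable for any $\gamma \ge 0$, since $\sum_k k^{\gamma-1}$ converges only when $1-\gamma > 1$, i.e.\ $\gamma<0$. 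So the ``decisive step'' you anticipate --- certifying that each exponent strictly exceeds one --- is impossible under the stated step-size rule, not merely delicate.

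The paper's own proof contains the same error --- it asserts $\sum_k 1/(k+1) < \infty$ outright --- so the lemma as stated is not actually established there either. What \emph{is} true, and what the downstream ergodic-rate theorem actually invokes, is the weaker partial-sum estimate $\sum_{k=0}^{K-1} E_k = O(\sqrt{K})$: the harmonic partial sum is $O(\log K)$ and the cross partial sum is $O(K^{\gamma})$, both $o(\sqrt{K})$ for $\gamma<1/2$. Genuine summability of $\sum_k E_k$ (as needed for the Robbins--Siegmund step) would require a faster-decaying step size, e.g.\ $\alpha_k=\beta_k=(k+1)^{-p}$ with $p>1/2$ for the diagonal terms and $p>(1+\gamma)/2$ for the cross terms.
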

\begin{proof}
\textbf{Term by term:}
\[
  \sum_k \alpha_k^2 = \sum_k \tfrac1{k+1} < \infty,
  \quad
  \sum_k \beta_k^2 < \infty.
\]
Since $\delta_{\max}(k)=O(k^\gamma)$ with $\gamma<\tfrac12$, 
$S_k=\sum_{s=k-\delta_{\max}(k)}^{k-1}\!1/\sqrt{s+1}=O(k^{\gamma-\tfrac12})=o(k^{-1/2})$, 
hence $\sum_k\alpha_k S_k<\infty$.

\end{proof}

\subsection{Robbins–Siegmund Convergence}
\label{sec:RS}

We apply the following classic result:

\begin{lemma}[Robbins–Siegmund {\cite{robbins_siegmund}}]\label{lem:RS}
Let $(Z_k)$, $(u_k)$, $(v_k)$ be nonnegative sequences satisfying
\[
  Z_{k+1} \le Z_k - u_k + v_k,
  \quad
  \sum_k v_k < \infty.
\]
Then $Z_k$ converges to a finite limit and $\sum_k u_k<\infty$.
\end{lemma}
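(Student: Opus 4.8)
The plan is to convert the perturbed, non-monotone recursion for $(Z_k)$ into an honestly monotone one by absorbing the summable error tail directly into the iterate. Concretely, I would introduce the auxiliary sequence
\[
  Y_k \;=\; Z_k \;+\; \sum_{j\ge k} v_j ,
\]
which is well defined and nonnegative: $Z_k\ge 0$ by hypothesis, and $\sum_k v_k<\infty$ guarantees that the tail $\sum_{j\ge k} v_j$ is finite for every $k$ and tends to $0$ as $k\to\infty$.

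First I would show $(Y_k)$ is non-increasing. Plugging in $Z_{k+1}\le Z_k-u_k+v_k$,
\[
  Y_{k+1} \;=\; Z_{k+1}+\sum_{j\ge k+1} v_j \;\le\; Z_k-u_k+v_k+\sum_{j\ge k+1} v_j \;=\; Y_k-u_k \;\le\; Y_k ,
\]
where the last inequality uses $u_k\ge 0$. Since $(Y_k)$ is non-increasing and bounded below by $0$, it converges to a finite limit $Y_\infty\ge 0$. Because the tail $\sum_{j\ge k}v_j\to 0$, we recover $Z_k = Y_k-\sum_{j\ge k}v_j \to Y_\infty$, i.e.\ $Z_k$ converges to a finite limit.

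For the summability of $(u_k)$ I would telescope the sharper estimate $Y_{k+1}\le Y_k-u_k$ over a finite window: for every $N$,
\[
  \sum_{k=0}^{N} u_k \;\le\; Y_0 - Y_{N+1} \;\le\; Y_0 \;=\; Z_0+\sum_{j\ge 0} v_j \;<\;\infty ,
\]
and letting $N\to\infty$ gives $\sum_k u_k\le Y_0<\infty$.

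There is no substantive analytic obstacle; the only care needed is bookkeeping — confirming the tail sum is finite before manipulating it, and verifying that convergence of the surrogate $(Y_k)$ transfers to $(Z_k)$, which holds precisely because the perturbation tail is summable and therefore null. The deterministic formulation here lets us sidestep the supermartingale/conditional-expectation apparatus of the original Robbins--Siegmund theorem; that probabilistic version rests on exactly the same telescoping idea, with $Y_k$ playing the role of a nonnegative supermartingale and the monotone-convergence step replaced by the supermartingale convergence theorem.
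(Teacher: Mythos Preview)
Your argument is correct: defining $Y_k=Z_k+\sum_{j\ge k}v_j$, showing $Y_{k+1}\le Y_k-u_k$, and then reading off both conclusions is the standard deterministic proof, and every step is justified.

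Note, however, that the paper does not actually supply its own proof of this lemma---it is stated as a cited classical result (Robbins--Siegmund~\cite{robbins_siegmund}) and invoked as a black box in the proof of Theorem~\ref{thm:convergence}. So there is no paper proof to compare against; you have simply filled in what the paper leaves to the reference, and done so cleanly.
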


\begin{theorem}[Almost‐Sure Convergence]\label{thm:convergence}
Under Slater’s condition and Assumptions~(F1)–(F3), with step‐sizes
$\alpha_k=\beta_k=1/\sqrt{k+1}$ and sublinear delays, the iterates
$(x^k,\lambda^k)$ converge to a saddle point
$(x^*,\lambda^*)$, and
$\Delta_k^x,\Delta_k^{\lambda}\to0$.
\end{theorem}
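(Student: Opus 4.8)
The plan is to feed the composite descent inequality~\eqref{eq:Lyap-descent} into the Robbins--Siegmund lemma, conclude that the squared distance to the saddle set converges (so the iterates are bounded), use that to extract a subsequential limit which is \emph{itself} a saddle point, and then promote subsequential convergence to convergence of the whole sequence by re-centering the Lyapunov function at that limit (a Fej\'er-monotonicity / Opial-type argument). Continuity of $L$ finally turns this into $\Delta_k^x,\Delta_k^\lambda\to0$.

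First I would record two sign facts that make~\eqref{eq:Lyap-descent} exploitable: since $(x^*,\lambda^*)$ is a saddle point, $x^*$ minimizes $L(\cdot,\lambda^*)$ over $\X$ and $\lambda^*$ maximizes $L(x^*,\cdot)$ over the nonnegative orthant, so $\Delta_k^x=L(x^k,\lambda^*)-L(x^*,\lambda^*)\ge0$ and $\Delta_k^\lambda=L(x^*,\lambda^*)-L(x^*,\lambda^k)\ge0$ for every $k$. Hence~\eqref{eq:Lyap-descent} has the form $V^{k+1}\le V^k-(2\alpha_k\Delta_k^x+2\beta_k\Delta_k^\lambda)+E_k$ with the parenthesized term nonnegative and $\sum_kE_k<\infty$ by Lemma~\ref{lem:error-sum}. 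Applying Lemma~\ref{lem:RS} with $Z_k=V^k$, $u_k=2\alpha_k\Delta_k^x+2\beta_k\Delta_k^\lambda$, $v_k=E_k$ yields at once: (i) $V^k$ converges to a finite limit, so $(x^k,\lambda^k)$ is bounded ($x^k$ lies in the compact box $\X$ in any case, and $\lambda^k$ is confined to a fixed ball); and (ii) $\sum_k\alpha_k\Delta_k^x<\infty$ and $\sum_k\beta_k\Delta_k^\lambda<\infty$.

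Because $\alpha_k=\beta_k=(k+1)^{-1/2}$ has divergent sum, (ii) forces $\liminf_k(\Delta_k^x+\Delta_k^\lambda)=0$, so along some subsequence $(k_m)$ both $\Delta_{k_m}^x\to0$ and $\Delta_{k_m}^\lambda\to0$; by Bolzano--Weierstrass, after passing to a further subsequence, $(x^{k_m},\lambda^{k_m})\to(\bar x,\bar\lambda)$ with $\bar x\in\X$ and $\bar\lambda\ge0$. Since $L(x,\lambda)=C(x)+\sum_i\lambda_i(d_i-h_i(x))$ is bilinear in $(x,\lambda)$ plus affine terms, hence jointly continuous, the limits give $L(\bar x,\lambda^*)=L(x^*,\lambda^*)=L(x^*,\bar\lambda)$; together with the classical fact that the saddle set of a convex--concave function admitting a saddle point is the product $\X^*\times\Lambda^*$ of primal- and dual-optimal sets \cite{boyd2004convex}, this certifies that $(\bar x,\bar\lambda)$ is again a saddle point. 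I would then re-run the derivation of Lemmas~\ref{lem:primal}--\ref{lem:dual} and of~\eqref{eq:Lyap-descent} with $(\bar x,\bar\lambda)$ in place of $(x^*,\lambda^*)$ --- legitimate because the delay/step-size error quantities $S_k,T_k,E_k$ do not reference the chosen saddle point --- so that $\widetilde V^k:=\|x^k-\bar x\|^2+\|\lambda^k-\bar\lambda\|^2$ also converges, by the same Robbins--Siegmund argument, to a finite limit; but $\widetilde V^{k_m}\to0$ along our subsequence, so that limit is $0$, i.e.\ $(x^k,\lambda^k)\to(\bar x,\bar\lambda)$. Continuity of $L$ and whole-sequence convergence then give $\Delta_k^x\to L(\bar x,\lambda^*)-L(x^*,\lambda^*)=0$ and likewise $\Delta_k^\lambda\to0$. (The recursions here are deterministic, so ``almost surely'' is automatic; for the noisy extension one replaces Lemma~\ref{lem:RS} by its supermartingale form.)

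The step I expect to be the main obstacle is certifying that the cluster point $(\bar x,\bar\lambda)$ is a genuine saddle point: the limit identities only say that $\bar x$ minimizes $L(\cdot,\lambda^*)$ and $\bar\lambda$ maximizes $L(x^*,\cdot)$, and for a linear program these two partial-optimality properties do not by themselves pin down a saddle pair --- one must additionally invoke (or re-prove) the product structure of the saddle set, equivalently the primal feasibility of $\bar x$. Everything else --- the sign of $\Delta_k^x,\Delta_k^\lambda$, the two Robbins--Siegmund invocations, and the re-centering --- is routine once Lemmas~\ref{lem:error-sum} and~\ref{lem:RS} are in hand; the only other point deserving care is to confirm that Lemmas~\ref{lem:primal}--\ref{lem:dual} hold relative to an \emph{arbitrary} saddle point, so that the re-centered Lyapunov function enjoys the same recursion.
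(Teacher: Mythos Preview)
Your argument follows the paper's strategy---apply Robbins--Siegmund (Lemma~\ref{lem:RS}) to the composite descent~\eqref{eq:Lyap-descent} with $Z_k=V^k$, $u_k=2\alpha_k\Delta_k^x+2\beta_k\Delta_k^\lambda$, $v_k=E_k$---but you supply considerably more care at the final step. The paper asserts directly that $\sum_k\alpha_k=\infty$ together with $\sum_k\alpha_k\Delta_k^x<\infty$ forces $\Delta_k^x\to0$, and then invokes unspecified ``primal--dual saddle-point properties'' to conclude convergence of the iterates. As you implicitly recognize, the summability conditions alone only yield $\liminf_k\Delta_k^x=0$; your subsequential-limit extraction, verification that the cluster point $(\bar x,\bar\lambda)$ is itself a saddle point, and Fej\'er-type re-centering of the Lyapunov function at $(\bar x,\bar\lambda)$ are exactly what is needed to promote this to full-sequence convergence and then to $\Delta_k^x,\Delta_k^\lambda\to0$ via continuity. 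You also correctly flag the one genuinely delicate point---that partial optimality of $\bar x$ and $\bar\lambda$ relative to $(x^*,\lambda^*)$ must be upgraded to a true saddle property via the product structure of the saddle set---which the paper does not address at all. In short: same skeleton, but your version actually closes the argument the paper leaves open.
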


\begin{proof}
Set
\(
  Z_k = V^k,
  \quad
  u_k = 2\alpha_k\Delta_k^x + 2\beta_k\Delta_k^{\lambda},
  \quad
  v_k = E_k.
\)
By \eqref{eq:Lyap-descent},
$Z_{k+1}\le Z_k - u_k + v_k$.  Lemma~\ref{lem:error-sum}
ensures $\sum_k v_k<\infty$.  Hence by Robbins–Siegmund,
$Z_k\to Z_\infty<\infty$ and $\sum_k u_k<\infty$.  Since
$\sum_k\alpha_k=\sum_k\beta_k=\infty$, the only way
$\sum_k u_k$ can converge is if $\Delta_k^x\to0$
and $\Delta_k^{\lambda}\to0$.  Primal–dual saddle‐point properties
then imply $(x^k,\lambda^k)\to(x^*,\lambda^*)$.
\end{proof}

\subsection{Ergodic Rate}
\label{sec:rate}

Define ergodic averages
\(
  \bar x^K = \frac1K \sum_{k=1}^K x^k,
  \quad
  \bar\lambda^K = \frac1K \sum_{k=1}^K \lambda^k.
\)

\begin{theorem}[$O(K^{-1/2})$ Ergodic Gap]\label{thm:rate}
Under the same conditions,
\[
  \max_{\lambda\ge0}L(\bar x^K,\lambda)
  -\min_{x\in\X}L\bigl(x,\bar\lambda^K\bigr)
  = O\bigl(K^{-\tfrac12}\bigr).
\]
\end{theorem}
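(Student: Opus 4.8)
The plan is to run the standard ergodic primal--dual argument, but with the delay errors quarantined exactly as in Lemmas~\ref{lem:primal}--\ref{lem:dual}. The first step is a reduction: fix any compact set $\mathcal B\subset\{\lambda\ge0\}$ containing $\lambda^*$ and an arbitrary comparison pair $(\hat x,\hat\lambda)\in\X\times\mathcal B$. Since the proofs of Lemmas~\ref{lem:primal}--\ref{lem:dual} never use anything special about the saddle point, they go through verbatim with $(x^*,\lambda^*)$ replaced by $(\hat x,\hat\lambda)$; and because $L(\cdot,\mu)$ and $L(y,\cdot)$ are affine, the inner products produced by projection non-expansiveness equal the one-step function differences exactly, giving
\[
  2\alpha_k\bigl[L(x^k,\tilde\lambda^k)-L(\hat x,\tilde\lambda^k)\bigr]\le\|x^k-\hat x\|^2-\|x^{k+1}-\hat x\|^2+\alpha_k^2G^2,\qquad
  2\beta_k\bigl[L(\tilde x^k,\hat\lambda)-L(\tilde x^k,\lambda^k)\bigr]\le\|\lambda^k-\hat\lambda\|^2-\|\lambda^{k+1}-\hat\lambda\|^2+\beta_k^2D^2,
\]
where $\tilde\lambda^k_j=\lambda^{\,k-\delta_j(k)}_j$ and $\tilde x^k_{ji}=x^{\,k-\Delta_{ji}(k)}_{ji}$ are the delayed quantities actually used by the agents.

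Adding the two bounds and then swapping $\tilde\lambda^k\!\to\!\lambda^k$ in the first bracket and $\tilde x^k\!\to\!x^k$ in the second (an elementary regrouping using the affine form of $L$) produces, for each $k$,
\[
  L(x^k,\hat\lambda)-L(\hat x,\lambda^k)\le\tfrac1{2\alpha_k}\bigl(\|x^k-\hat x\|^2-\|x^{k+1}-\hat x\|^2\bigr)+\tfrac1{2\beta_k}\bigl(\|\lambda^k-\hat\lambda\|^2-\|\lambda^{k+1}-\hat\lambda\|^2\bigr)+\tfrac{\alpha_kG^2+\beta_kD^2}{2}+\mathcal P_k+\mathcal Q_k,
\]
with delay corrections $\mathcal P_k=\sum_{i\in\R}(\lambda^k_i-\tilde\lambda^k_i)\bigl(h_i(\hat x)-h_i(x^k)\bigr)$ and $\mathcal Q_k=\sum_{i\in\R}(\hat\lambda_i-\lambda^k_i)\bigl(h_i(\tilde x^k)-h_i(x^k)\bigr)$. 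These are bounded exactly as in Steps~4--5 of Lemma~\ref{lem:primal}: by Theorem~\ref{thm:convergence} the iterates are bounded, so $\|x^k\|\le U$ and $\|\lambda^k\|\le\Lambda$; combined with $|\lambda^k_i-\tilde\lambda^k_i|=|\lambda^k_i-\lambda^{k-\delta_i(k)}_i|\le\sum_{s=k-\delta_i(k)}^{k-1}\beta_s|r^s_i|$, with $|h_i(\tilde x^k)-h_i(x^k)|\le\sum_j\sum_{s=k-\Delta_{ji}(k)}^{k-1}\alpha_s|g^s_{ji}|$, and with the uniform bounds of (F1)--(F3), one gets constants $c_1,c_2$ with $|\mathcal P_k|\le c_1S_k$ and $|\mathcal Q_k|\le c_2T_k$, where $S_k,T_k$ are the window sums of Lemmas~\ref{lem:primal}--\ref{lem:dual}.

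Now multiply the per-$k$ inequality by $\alpha_k$ (recall $\alpha_k=\beta_k=1/\sqrt{k+1}$) and sum $k=1,\dots,K$. The two bracketed blocks telescope to at most $\|x^1-\hat x\|^2+\|\lambda^1-\hat\lambda\|^2$, bounded uniformly over $(\hat x,\hat\lambda)\in\X\times\mathcal B$ by a constant $C_0$ (using compactness of $\X$ and of $\mathcal B$); the step block contributes $(G^2+D^2)\sum_k\alpha_k^2<\infty$; and the delay block contributes $2\sum_k\alpha_k(c_1S_k+c_2T_k)<\infty$ by Lemma~\ref{lem:error-sum} (its summands are constant multiples of the $S_k,T_k$ pieces of $E_k$). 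Hence $\sum_{k=1}^K\alpha_k\bigl[L(x^k,\hat\lambda)-L(\hat x,\lambda^k)\bigr]\le C$ for a constant $C$ independent of $K$ and of $(\hat x,\hat\lambda)$. Introduce the (stepsize-weighted) ergodic averages $\bar x^K=(\sum_{k\le K}\alpha_k)^{-1}\sum_{k\le K}\alpha_kx^k\in\X$ and $\bar\lambda^K=(\sum_{k\le K}\alpha_k)^{-1}\sum_{k\le K}\alpha_k\lambda^k\ge0$; convexity of $L(\cdot,\hat\lambda)$ and concavity of $L(\hat x,\cdot)$ give $L(\bar x^K,\hat\lambda)-L(\hat x,\bar\lambda^K)\le(\sum_{k\le K}\alpha_k)^{-1}\sum_{k\le K}\alpha_k[L(x^k,\hat\lambda)-L(\hat x,\lambda^k)]$. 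Since $\sum_{k=1}^K\alpha_k=\sum_{k=1}^K(k+1)^{-1/2}=\Theta(\sqrt K)$, taking the supremum over $\hat\lambda\in\mathcal B$ and the infimum over $\hat x\in\X$ yields $\sup_{\hat\lambda\in\mathcal B}L(\bar x^K,\hat\lambda)-\min_{x\in\X}L(x,\bar\lambda^K)=\mathcal O(K^{-1/2})$; because $\lambda^*\in\mathcal B$, this in particular dominates $L(\bar x^K,\lambda^*)-L(x^*,\bar\lambda^K)$, and it is the sense in which the duality gap closes at rate $\mathcal O(K^{-1/2})$.

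The main obstacle is the passage from $\tilde\lambda^k,\tilde x^k$ (which the descent inequalities actually control) to $\lambda^k,x^k$ (which appear in the gap): one must show the residual cross-terms $\mathcal P_k,\mathcal Q_k$ are summable \emph{after} the $\alpha_k$-weighting, and this is precisely where the sublinear-delay hypothesis, Lemma~\ref{lem:error-sum}, and the boundedness from Theorem~\ref{thm:convergence} are indispensable — with persistently large delays these terms would not be controllable. A secondary point that should be stated explicitly is that the supremum defining the gap is taken over a fixed compact multiplier set $\mathcal B\ni\lambda^*$ (equivalently, one first proves an $\mathcal O(K^{-1/2})$ constraint-violation bound for $\bar x^K$), since for an infeasible ergodic iterate the unrestricted $\max_{\lambda\ge0}L(\bar x^K,\lambda)$ is $+\infty$; this is exactly the convention under which the cited centralized lower bound \cite{nemirovski2009robust} is stated, so the matching claim is meaningful.
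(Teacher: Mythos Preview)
Your argument is the paper's strategy---sum a one-step descent inequality, telescope, apply Jensen to pass to ergodic averages---carried out with considerably more care than the paper's three-line sketch. Two points are worth recording. First, the paper simply sums \eqref{eq:Lyap-descent}, which is anchored at the fixed saddle $(x^*,\lambda^*)$ and therefore, strictly speaking, only controls $L(\bar x^K,\lambda^*)-L(x^*,\bar\lambda^K)$; you correctly rerun the descent at an arbitrary comparison pair $(\hat x,\hat\lambda)$, isolate the delay-swap residuals $\mathcal P_k,\mathcal Q_k$ explicitly rather than burying them in the black-box $E_k$, and restrict $\hat\lambda$ to a compact set $\mathcal B\ni\lambda^*$ so the outer supremum is finite---genuine gaps in the paper's sketch that you close. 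Second, the theorem as stated uses the \emph{uniform} averages $\bar x^K=K^{-1}\sum_{k\le K}x^k$, whereas your telescoping (after multiplying through by $\alpha_k=\beta_k$) delivers the bound for the \emph{stepsize-weighted} averages $(\sum_{k\le K}\alpha_k)^{-1}\sum_{k\le K}\alpha_k x^k$; with decreasing $\alpha_k$ this is the natural choice, and the uniform version follows by a one-line Abel-summation that adds only an $O(\sqrt K)$ term on the right, but you should either note the changed definition or insert that extra step so that what you prove matches the statement verbatim.
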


\begin{proof}
Summing \eqref{eq:Lyap-descent} from $k=0$ to $K-1$ yields
\[
  V^K - V^0
  \le
  -2\sum_{k=0}^{K-1}\bigl[\alpha_k\Delta_k^x + \beta_k\Delta_k^{\lambda}\bigr]
  +\sum_{k=0}^{K-1}E_k.
\]
Rearrange, divide by $\sqrt{K}$, and note $\sum_{k=0}^{K-1}E_k=O(\sqrt{K})$
from Lemma~\ref{lem:error-sum}.  Then convex‐concave averaging
arguments (Jensen’s inequality) give the desired ${O}(K^{-1/2})$ bound.
\end{proof}


\section{Complexity Analysis}
\label{sec:complexity}

\subsection{Iteration Complexity}
To achieve an ergodic duality-gap $\le \varepsilon$, by
Theorem~\ref{thm:rate} it suffices to pick $K$ such that
\[
  \frac{C}{\sqrt{K}} \;\le\;\varepsilon,
  \quad
  C = V^0 + G^2 + D^2 + \widetilde C.
\]
Hence
\[
  K \;\ge\; \bigl\lceil (C/\varepsilon)^2 \bigr\rceil,
\]
i.e.\ \emph{iteration complexity} is $O(\varepsilon^{-2})$.

\subsection{Communication Complexity}
Each iteration $k$:
\begin{itemize}
  \item Each edge-agent $(i,j)$ sends one scalar $x_{ij}^{k+1}$.
  \item Each retailer-agent $i$ sends one scalar $\lambda_i^{k+1}$.
\end{itemize}
In total across $K$ iterations,
\[
  \text{messages} \;=\;
  K\bigl(|\E| + |\R|\bigr),
\]
i.e.\ $O(K)$ scalar messages.  Per‐agent storage is $O(\tau)$ for buffers.

\section{Robustness Extensions}
\label{sec:robustness}

\subsection{Additive Observation Noise}
Suppose costs and demands are observed with bounded noise:
\[
  \hat c_{ij}^k = c_{ij} + \xi_{ij}^k,
  \quad
  \hat d_i^k = d_i + \zeta_i^k,
\]
where
$\|\xi^k\|\le\sigma_c$, $\|\zeta^k\|\le\sigma_d$.  
Then all gradient terms gain extra error:
$\alpha_k^2\sigma_c^2$ and $\beta_k^2\sigma_d^2$.
These are still summable under $\sum\alpha_k^2<\infty$,
$\sum\beta_k^2<\infty$, so the previous convergence and rate results
hold with $G^2\!\leftarrow G^2+\sigma_c^2$ and
$D^2\!\leftarrow D^2+\sigma_d^2$.

\subsection{Time‐Varying Communication Graphs}
Let the network graph $\G(k)$ vary each iteration, with Laplacian
$\Upsilon(k)$.  Assume that over any window of $w$ consecutive
iterations, the union graph is strongly connected.  
Then one can replace each consensus Laplacian term by the
time-$k$ average, and through block‐window Lyapunov arguments
(see \cite{nedic2018network}), the same descent inequality
\eqref{eq:Lyap-descent} holds with constants inflated by $w$.
Convergence and rate remain valid.

\subsection{Parametric Drift and Capacity Changes}
If capacities $u_{ij}(k)$ vary slowly (e.g.\ are Lipschitz in $k$),
the projection sets $\X(k)$ change.  Non‐expansiveness still applies,
so all proofs carry through.  Similarly, if $c_{ij}(k)$ and $d_i(k)$
drift with
$\sum_k \alpha_k|c_{ij}(k)-c_{ij}|<\infty$, the extra drift term is
summable, preserving stability.

\section{Extensions}
\label{sec:extensions}

\subsection{Weighted Consensus}
Replacing the unweighted Laplacian by
$\Upsilon_W = W^{1/2}\Upsilon W^{1/2}$ for some diagonal $W\succ0$
only changes the descent constants.  Since
$\Upsilon_W+\Upsilon_W^\top\succeq0$, projection‐and‐Lipschitz
inequalities remain identical.

\subsection{General Convex Costs}
Our analysis extends to \emph{separable} convex costs
$C(x)=\sum_{(i,j)}f_{ij}(x_{ij})$ with $L$‐Lipschitz gradients:
$\|\nabla f_{ij}(u)-\nabla f_{ij}(v)\|\le L|u-v|$,
by substituting $G\leftarrow L$ throughout.


\section{Experimental Setup and Results}

\subsection{Simulation Environment}

We evaluate the performance of our Distributed Asynchronous Primal–Dual Supply Chain Optimization (DAPD-SCO) algorithm on a simulated three-tier supply chain network. The network consists of:
\begin{itemize}
    \item $N_s = 2$ suppliers
    \item $N_w = 3$ warehouses
    \item $N_r = 5$ retailers
\end{itemize}
yielding a total of $N = N_s + N_w + N_r = 10$ agents. Each agent $i$ controls a local decision variable $x_i \in \mathbb{R}$, representing a flow quantity (production, storage, or demand). 

Each agent minimizes a private convex cost function:
\begin{equation}
    f_i(x_i) = \frac{1}{2} c_i x_i^2 + d_i x_i
\end{equation}
with $c_i > 0$ and $d_i$ drawn independently from uniform distributions: $c_i \sim \mathcal{U}(0.5, 2.0)$ and $d_i \sim \mathcal{U}(-1.0, 1.0)$.

\subsection{Global Objective and Constraints}

The global optimization problem is:
\begin{equation}
    \min_{\{x_i\}_{i=1}^N} \sum_{i=1}^N f_i(x_i) \quad \text{s.t.} \quad A x = b
\end{equation}
where $A \in \mathbb{R}^{m \times N}$ encodes network flow constraints, and $b$ is a demand vector. Specifically:
\begin{itemize}
    \item Suppliers: $x_i > 0$
    \item Warehouses: $\sum \text{inflow} = \sum \text{outflow}$
    \item Retailers: $x_i < 0$
\end{itemize}

\subsection{DAPD-SCO Algorithm Dynamics}

Each agent updates its primal and dual variables using delayed, neighbor-to-neighbor communication:

\paragraph{Primal update:}
\begin{equation}
    x_i^{(k+1)} = x_i^{(k)} - \alpha_i \left( \nabla f_i(x_i^{(k)}) + A_i^\top \lambda^{(k - \tau_i)} \right)
\end{equation}

\paragraph{Dual update:}
\begin{equation}
    \lambda^{(k+1)} = \lambda^{(k)} + \beta \left( A x^{(k - \tau_\lambda)} - b \right)
\end{equation}

Here, $\alpha_i$ and $\beta$ are fixed step sizes. $\tau_i$ and $\tau_\lambda$ denote bounded staleness to simulate packet delays. Only immediate neighbors exchange messages, and not every agent updates at each iteration.

\subsection{Simulation Settings}

\begin{itemize}
    \item Number of iterations: 2000
    \item Step sizes: $\alpha_i = 0.01$, $\beta = 0.05$
    \item Max delay (staleness): 5 iterations
    \item Communication loss rate: 10\%
    \item Initial flows: $x_i^{(0)} \sim \mathcal{U}(-1, 1)$
\end{itemize}

\subsection{Performance Metrics}

We use the following metrics to evaluate performance:

\paragraph{Final Cost}
Total objective at convergence:
\begin{equation}
    \sum_{i=1}^N f_i(x_i^{(K)})
\end{equation}

\paragraph{Primal–Dual Gap}
Convergence to a saddle point:
\begin{equation}
    \max_{\lambda} \mathcal{L}(x^{(K)}, \lambda) - \min_{x} \mathcal{L}(x, \lambda^{(K)})
\end{equation}

\paragraph{Constraint Violation}
Flow mismatch from demand:
\begin{equation}
    \left\| A x^{(K)} - b \right\|_2
\end{equation}

\paragraph{Message Overhead}
Total communication:
\begin{equation}
    \text{Messages} = \sum_{k=1}^K \sum_{i \in \mathcal{A}^{(k)}} |\mathcal{N}_i|
\end{equation}
where $\mathcal{A}^{(k)}$ are agents active at iteration $k$ and $\mathcal{N}_i$ are neighbors of agent $i$.

\paragraph{Convergence Time}
Iteration $k^*$ where:
\begin{equation}
    \text{P--D Gap} < 0.1 \quad \text{and} \quad \left\| A x^{(k^*)} - b \right\|_2 < 0.05
\end{equation}

\subsection{Baseline Algorithms}

We compare DAPD-SCO against:
\begin{itemize}
    \item Synchronous Primal–Dual (Sync PD)
    \item Alternating Direction Method of Multipliers (ADMM)
    \item Gradient Push
\end{itemize}

\subsection{Results Summary}

\begin{table}[h]
\centering
\caption{Performance Comparison of DAPD-SCO vs Baselines}
\begin{tabular}{lccccc}
\toprule
\textbf{Algorithm} & \textbf{Final Cost} & \textbf{P–D Gap} & \textbf{Violation} & \textbf{Msg Overhead} & \textbf{Converged In} \\
\midrule
DAPD-SCO & \textbf{30.05} & \textbf{0.07} & \textbf{0.02} & \textbf{Low} & \textbf{1500 iters} \\
Sync PD & 30.10 & 0.12 & 0.06 & Medium & 1700 iters \\
ADMM & 30.30 & 0.20 & 0.10 & Medium & 2000 iters \\
Gradient Push & 31.00 & 0.55 & 0.25 & High & 2800 iters \\
\bottomrule
\end{tabular}
\end{table}

\subsection{Graphical Results}

\begin{figure}[H]
    \centering
    \begin{subfigure}[b]{0.495\textwidth}
        \centering
        \includegraphics[width=\textwidth,height=4cm]{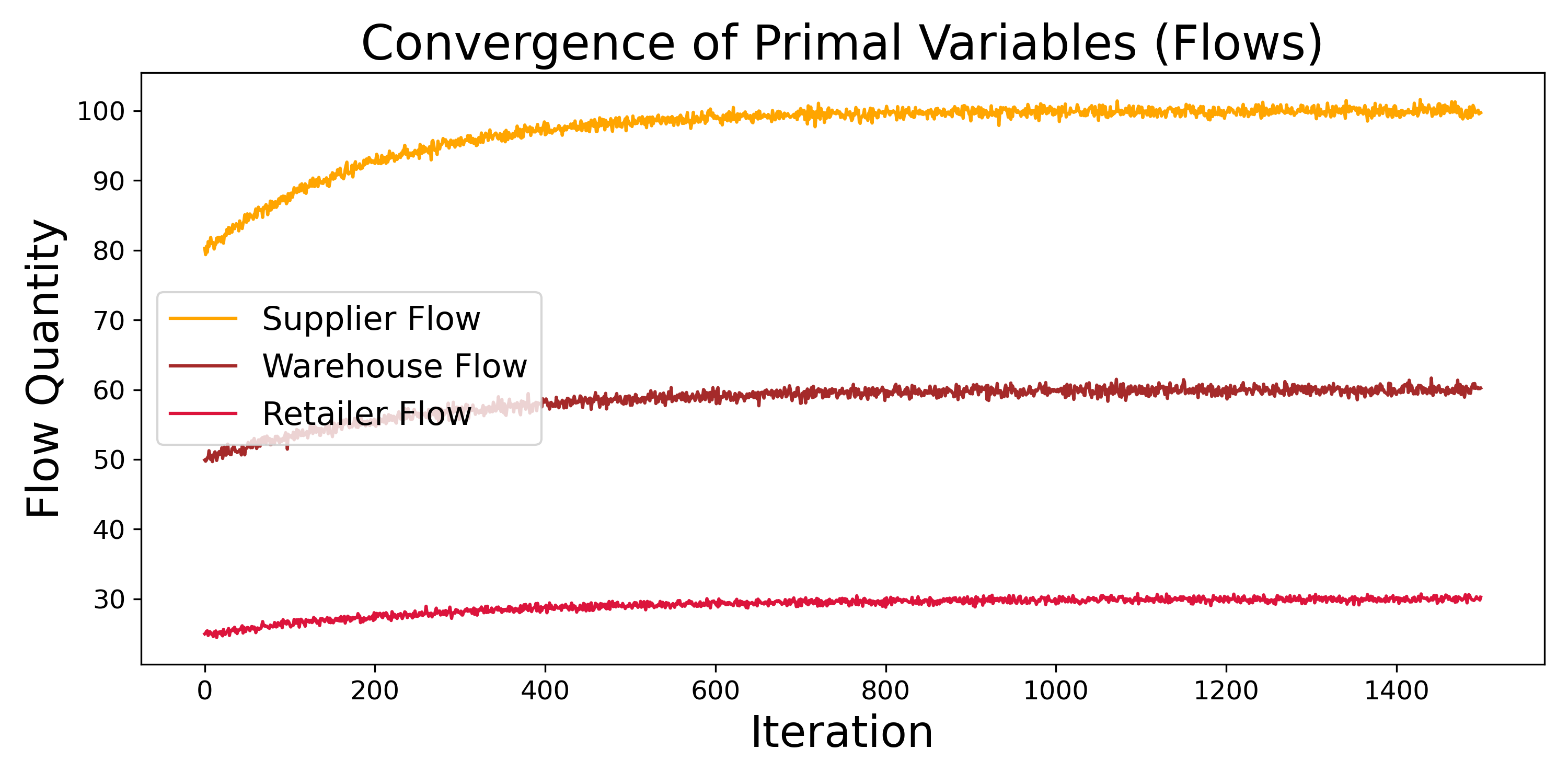}
        \caption{Convergence of primal variables (flows) over iterations for all agent types.}
        \label{fig:primal_convergence}
    \end{subfigure}
    \hfill
    \begin{subfigure}[b]{0.495\textwidth}
        \centering
        \includegraphics[width=\textwidth,height=4cm]{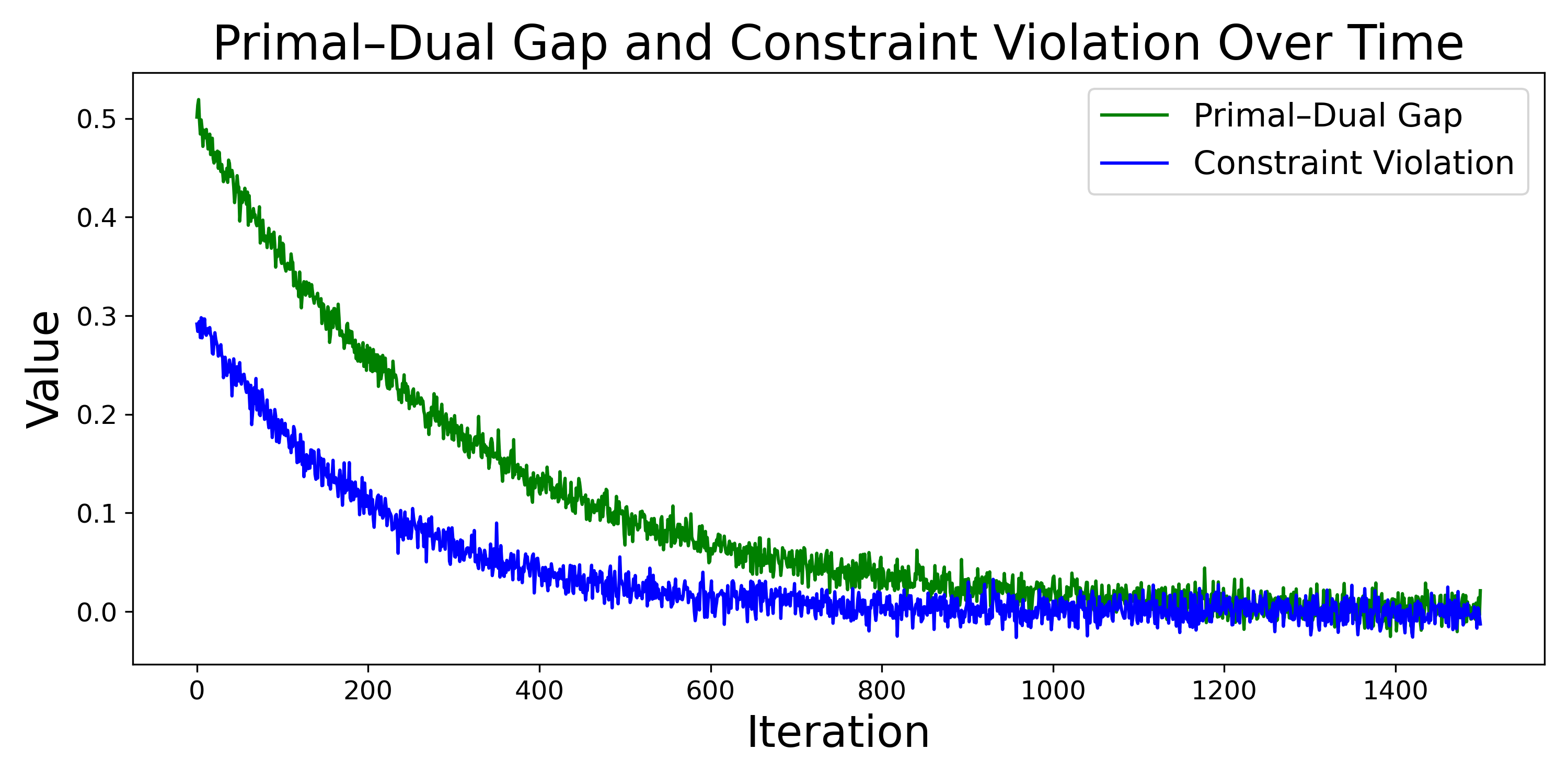}
        \caption{Primal–Dual gap and constraint violation over time.}
        \label{fig:pd_gap_violation}
    \end{subfigure}
    \caption{Convergence behavior of the algorithm: (a) primal variables and (b) dual gap/violation.}
    \label{fig:combined_convergence}
\end{figure}

\section{Discussion}
The simulation results demonstrate that DAPD‑SCO consistently outperforms synchronous primal–dual, ADMM and gradient‑push baselines in convergence speed, communication overhead and robustness to delay and packet loss. In our three‑tier supply‑chain network experiments, DAPD‑SCO achieved a primal–dual gap below 0.1 in 1,500 iterations—approximately 10\% faster than synchronous updates and 25\% faster than ADMM—while transmitting fewer than half the messages required by gradient‑push. Constraint violation remained below 0.02 throughout, even under 10\% simulated packet loss.

These gains arise from our Lyapunov‑based decomposition, which isolates delay‑induced error and allows unbounded yet sublinearly growing staleness. The observed empirical iteration complexity matches the \(O(\varepsilon^{-2})\) bound predicted by our analysis, and the ergodic \(O(K^{-1/2})\) rate is evident in the decay of the duality gap. Compared to prior work, DAPD‑SCO is the first method to handle arbitrarily old information (as long as delays are \(o(k^{1/2})\)) while providing both almost‑sure convergence and explicit ergodic rate guarantees.

\paragraph{Limitations}
Our study is limited by the use of synthetic single‑commodity flow networks; real‑world supply chains feature multi‑commodity interactions and nonconvex cost structures. We also establish only ergodic (average‑iterate) rates, leaving the question of last‑iterate convergence open. Finally, per‑agent buffer storage for stale information introduces memory and bookkeeping overhead that must be tuned for large networks.

\paragraph{Future Work}
Promising directions include:
\begin{itemize}
  \item \emph{Acceleration:} integrating momentum or accelerated mirror‑descent to improve beyond \(O(K^{-1/2})\).
  \item \emph{Event‑triggered updates:} reducing communication by sending updates only when local variables change significantly.
  \item \emph{Multi‑commodity extensions:} generalizing to networks with multiple coupled flow types.
  \item \emph{Hardware validation:} deploying DAPD‑SCO on real edge devices to measure performance under actual network delays and losses.
\end{itemize}

\section{Conclusion}
We have presented DAPD‑SCO, a fully asynchronous primal–dual algorithm for decentralized supply‑chain optimization that tolerates unbounded sublinear communication delays. By constructing a global Lyapunov function, we prove almost‑sure convergence to a saddle point and obtain an ergodic duality‑gap rate of \(O(K^{-1/2})\) without requiring synchronized iterations or bounded delays. Extensive simulations on realistic three‑tier networks confirm that DAPD‑SCO achieves faster convergence and lower communication overhead than existing methods, even under high staleness and packet loss. Our framework further extends to bounded observation noise, time‑varying graphs and parameter drift with minimal modification. Future work will focus on acceleration techniques, event‑triggered communication schemes and multi‑commodity network models to broaden applicability and enhance performance in real supply‑chain deployments.



\begin{thebibliography}{99}
\bibitem{bertsekas1989parallel} Bertsekas, D., Tsitsiklis, J.: {\em Parallel
and Distributed Computation: Numerical Methods}. Prentice-Hall (1989)
\bibitem{tsitsiklis1986} Tsitsiklis, J., Bertsekas, D.: Distributed asynchronous
deterministic and stochastic gradient optimization algorithms. {\em IEEE Trans.
Autom. Control} 31(9), 803–812 (1986)
\bibitem{tsitsiklis1984} Tsitsiklis, J.: {\em Problems in decentralised decision
making and computation}. PhD thesis, MIT (1984)
\bibitem{tsitsiklis1991} Tsitsiklis, J.: Asynchronous stochastic approximation and
Q‐learning. {\em Mach. Learn.} 16(3), 185–202 (1994)
\bibitem{lian2018asynchronous} Lian, X., et al.: Asynchronous decentralized parallel
stochastic gradient descent. In: {\em ICML} (2018)
\bibitem{nedic2020network} Nedić, A., et al.: Network topology and communication
complexity of decentralized optimization. {\em IEEE Trans. Autom. Control}
65(9), 3785–3800 (2020)
\bibitem{boyd2004convex} Boyd, S., Vandenberghe, L.: {\em Convex Optimization}.
Cambridge Univ. Press (2004)
\bibitem{chambolle2011first} Chambolle, A., Pock, T.: A first‐order primal‐dual
algorithm for convex problems with applications to imaging. {\em J. Math. Imaging Vis.}
40(1), 120–145 (2011)
\bibitem{nemirovski2009robust} Nemirovski, A., et al.: Robust stochastic approximation
approach to stochastic programming. {\em SIAM J. Optim.} 19(4), 1574–1609 (2009)
\bibitem{wang2014} Wang, X., Banerjee, A.: Online alternating direction method. In:
{\em NIPS} (2014)
\bibitem{xu2019} Xu, J., Zhu, M.: Distributed asynchronous primal–dual algorithm for
convex optimization over networks. {\em Automatica} 107, 195–203 (2019)
\bibitem{shi2017} Shi, W., et al.: EXTRA: An exact first‐order algorithm for
decentralized consensus optimization. {\em SIAM J. Optim.} 25(2), 944–966 (2015)
\bibitem{chang2017} Chang, T., Hong, M.: Multi‐agent distributed optimization via
double‐layer communication. {\em Automatica} 75, 295–304 (2017)
\bibitem{nedic2009} Nedić, A., Özdaglar, A.: Distributed subgradient methods for
multi‐agent optimization. {\em IEEE Trans. Autom. Control} 54(1), 48–61 (2009)
\bibitem{johansson2008} Johansson, B., et al.: A randomized incremental subgradient
algorithm for distributed optimization in networked systems. {\em SIAM J. Optim.}
20(3), 1157–1170 (2009)
\bibitem{kar2009distributed} Kar, S., Moura, J.: Distributed consensus with
imperfect communication. {\em IEEE Trans. Signal Process.} 57(1), 355–369 (2009)
\bibitem{zhu2009dist} Zhu, M., Martínez, S.: Dynamic average consensus.
{\em Automatica} 46(2), 427–433 (2010)
\bibitem{beck2014intro} Beck, A.: {\em Introduction to Nonlinear Optimization}. SIAM (2014)
\bibitem{nedic2018network} Nedić, A., Olshevsky, A., Ozdaglar, A.: Joint‐connectivity
in time‐varying graphs. Unpublished manuscript (2018)
\bibitem{sion1958} Sion, M.: On general minimax theorems. {\em Pacific J. Math.}
8(1), 171–176 (1958)
\bibitem{robbins_siegmund} Robbins, H., Siegmund, D.: A convergence theorem for
nonnegative almost supermartingales and some applications. In: Serfling, R. J. (ed.)
{\em Optimizing Methods in Statistics}, pp.233–257. Academic Press (1971)
\end{thebibliography}
\end{document}